\newtheorem{teo}{Theorem}
\newtheorem{rem}{Remark}
 \title{{\bf   On one application of the Zigmund-Marczinkevich theorem}}
\author{Maksim \,V.~Kukushkin   \\ \\
 \small  \textit{Moscow State University of Civil Engineering, 129337,  Moscow, Russia}\\
 \small\textit{Kabardino-Balkarian Scientific Center, RAS, 360051,  Nalchik, Russia}\\
\textit{\small\textit{kukushkinmv@rambler.ru}} }
\date{}
\begin{document}

\maketitle

\begin{abstract}
In this paper we aim to  generalize    results obtained    in the framework of fractional calculus by the way of reformulating them   in terms  of operator theory. In its own turn,   the achieved generalization allows us to spread the obtained technique on practical problems  that connected with various physical -
chemical processes.

\end{abstract}
\begin{small}\textbf{Keywords:} Positive operator; fractional power of an operator; semigroup generator;
 strictly accretive property.\\\\
{\textbf{MSC} 26A33; 47A10; 47B10; 47B25. }
\end{small}

\section{Introduction}

To write this paper, we were firstly motivated by  the boundary value  problems connected with
  various physical -
chemical processes: filtration of liquid and gas in highly porous fractal   medium; heat exchange processes in medium  with fractal structure and memory; casual walks of a point particle that starts moving from the origin
by self-similar fractal set; oscillator motion under the action of
elastic forces which is  characteristic for  viscoelastic media, etc. It is worth noticing that the attention of many engineers are being  attracted to the following processes:     dielectric polarizations   \cite{firstab_lit: Sun H.H.}, electrochemical processes \cite{firstab_lit:Ichise M.},\cite{firstab_lit:H. Rekhviashvili},\cite{firstab_lit: 1Sun H.H.},  colored noises
\cite{firstab_lit: Mandelbrot B.},  chaos \cite{firstab_lit: Hartley T.T}.
 The  special   interest   is devoted to  the viscoelastic materials
 \cite{firstab_lit: Bagley R.L.},\cite{firstab_lit: Koeler R.C.},\cite{firstab_lit: 1Koeler R.C.},\cite{firstab_lit: Skaar S.B.}.
 As it is well-known, to describe these processes we have to involve the
theory of differential equations of fractional order and such notions  as the Riemann-Liouville, Marchaud, Weyl, Kipriyanov fractional derivatives, which     have applications to the physical -
chemical processes listed above.  For instance,
the model  of the so-called  intensification of scattering processes  is based on the   fractional telegraph equation which was studied in the papers \cite{firstab_lit:1Mamchuev},\cite{firstab_lit:2Mamchuev}. In its own turn,
	the foundation  of   models  describing the  processes listed above can be obtained by virtue of fractional calculus methods, the central point of which   is a concept of the Riemann-Liouville operator acting in the weighted Lebesgue space.
In particular,  we would like   to point out the reader  attention to   the boundary value  problems  for the second order   differential operator  with the Riemann-Liouville fractional derivative in final terms. Many papers were  devoted to this question, for instance
\cite{firstab_lit:1Aleroev1984},\cite{firstab_lit:3Aleroev2000},\cite{kukushkin2019},\cite{firstab_lit:1Nakhushev1977}. It is quite reasonable that the  variety of fractional derivative senses mentioned above create a motivation to consider   abstract methods in order to solve concrete problems without inventing an additional technique in each case.
   In its own turn, the operator theory methods play an important role in  applications and need not any of special advertising. Having forced by these reasons,  we deal with mapping theorems for  operators acting on Banach spaces in order to obtain afterwards the desired results applicable to   integral operators.
   We also note that our interest was inspired by lots of previously known  results related to mapping theorems for fractional integral  operators      obtained by mathematicians  such as
    Rubin B.S. \cite{firstab_lit: Rubin},\cite{firstab_lit: Rubin 1},\cite{firstab_lit: Rubin 2}, Vakulov B.G. \cite{firstab_lit: Vaculov},   Samko S.G.  \cite{firstab_lit: Samko M. Murdaev},\cite{firstab_lit: Samko Vakulov B. G.}, Karapetyants N.K.
\cite{firstab_lit: Karapetyants N. K. Rubin B. S. 1},\cite{firstab_lit: Karapetyants N. K. Rubin B. S. 2}.

\section{Preliminaries}

We consider a pair of  spaces with a finite  measure $(\Omega ,\mathcal{F},\mu_{i}),\,(i=0,1),$ the  corresponding Banach spaces of real functions defined on a set  $\Omega$ are denoted by $ L_{p}(\Omega,\mu_{i}),\,1< p < \infty.$   As usual, we assume that $L_{p}(\Omega,\mu_{i}) $ are reflexive and the Riesz representation theorem is true. The dual normed spaces are denoted by $L_{p'}(\Omega,\mu_{i})$ respectively, where $1/p+1/p'=1.$ Suppose $L_{p}(\Omega,\mu_{0})\cap L_{p}(\Omega,\mu_{1})\neq\emptyset,$   there exists a basis $\{e_{n}\}_{0}^{\infty}$ in  $L_{p}(\Omega,\mu_{1}),\,\{e_{n}\}_{0}^{\infty}\subset L_{p}(\Omega,\mu_{0})\cap L_{p}(\Omega,\mu_{1}),$ which is also a basis in $  L_{p'}(\Omega,\mu_{1}).$ Under this assumption, it is quite reasonable to involve the auxiliary consruction
 $$
 S_{k}f:=\sum\limits_{n=0}^{k}f_{n}e_{n},\,f_{n}:=\int\limits_{\Omega}fe_{n}d\mu_{1},\,k=0,1,...\,,\,f\in L_{p}(\Omega,\mu_{1}).$$
   We  need the following essential assumption, the measure $\mu_{1}$   and the domain $\Omega$ are such that the Zigmund-Marczinkevich theorem  (see \cite{firstab_lit: Marz}, Appendix) is applicable.
Let $A,B$ be a pair of linear operators boundary acting on $L_{p}(\Omega,\mu_{0}),L_{p}(\Omega,\mu_{1})$ respectively, moreover both operators have the same restriction on $L_{p}(\Omega,\mu_{0})\cap L_{p}(\Omega,\mu_{1}).$   Assume that the   operator  $A^{-1} $ is defined on $\{e_{n}\}$ and
 denote the following functionals by
 \begin{equation}\label{1}
 (e_{m},A e_{n})_{ \mu_{1}}  = A_{mn},\,(e_{m},A^{-1}e_{n})_{  \mu_{1}} = A'_{mn},
 \end{equation}
here and further we use the denotations
$$
(f,g )_{ \mu_{i}}:= \int\limits_{\Omega}fg \,d\mu_{i},\;(f,g )_{ \mu_{0},n}:= \int\limits_{\Omega_{n}}fg \,d\mu_{0}.
 $$
\section{Main results}

The following theorems are formulated in terms of    the  Zigmund-Marczinkevich theorem (see Appendix) and provide a description of mapping properties of  the quite   wide operators  class   including fractional integral operators.
\begin{teo}\label{T1}
Suppose $2\leq p< \infty,\,\beta,\lambda \in [0,\infty),$
$$
 \psi\in L_{p}(\Omega,\mu_{1}) ,\, \left|\sum\limits_{n=0}^{\infty} \psi_{n}A_{m n}\right| \sim m^{-\lambda},  \,M_{m}\sim m^{\beta},\,m\rightarrow \infty ;
$$
then
$$A \psi =f\in L_{q}(\Omega,\mu_{1}), $$
 where $q=p,$ if  $\,0\leq\lambda\leq1/2;$
$q$ is an arbitrary large number     satisfying
\begin{equation}\label{3}
q<\frac{\nu(2\beta+1)}{\nu(\beta+1-\lambda)+2\lambda-1},
\end{equation}
if $1/2<\lambda<[\nu(\beta+1)-1]/(\nu-2);$ and $q$ is an arbitrary large number, if $  \lambda\geq [\nu(\beta+1)-1]/(\nu-2).$
 Moreover the image     is represented by a convergent in $L_{q}(\Omega,\mu_{1})$   series
\begin{equation*}
f(x) =\sum\limits_{m=0}^{\infty}e_{m}(x)  \sum\limits_{n=0}^{\infty} \psi_{n}A _{mn}.
\end{equation*}
This theorem can be formulated in the  matrix form
$$
  A \times\psi=f\,,\;\;\;\;\sim\;\;\;\;
  \begin{pmatrix}A _{00}& A _{01}&...\\
 A _{10}& A _{11}&...\\
\cdot\\
\cdot\\ \cdot&&...
\end{pmatrix}\times\begin{pmatrix}\psi_{0}\\\psi_{1}\\ \cdot\\ \cdot \\ \cdot \end{pmatrix}= \begin{pmatrix}f_{0}\\f_{1}\\ \cdot\\ \cdot \\ \cdot \end{pmatrix}.
$$
 \end{teo}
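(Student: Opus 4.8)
The plan is to start from the elementary but decisive observation that the numbers appearing in the hypothesis are nothing but the Fourier coefficients of the image with respect to the system $\{e_{n}\}$. Writing $\psi=\sum_{n}\psi_{n}e_{n}$ in $L_{p}(\Omega,\mu_{1})$ and applying $A$ termwise, the symmetry of the bilinear pairing $(f,g)_{\mu_{1}}=\int_{\Omega}fg\,d\mu_{1}$ gives $(f,e_{m})_{\mu_{1}}=(A\psi,e_{m})_{\mu_{1}}=\sum_{n}\psi_{n}(e_{m},Ae_{n})_{\mu_{1}}=\sum_{n}\psi_{n}A_{mn}$. Hence the $m$-th Fourier coefficient of $f=A\psi$ equals $f_{m}=\sum_{n}\psi_{n}A_{mn}$, the candidate series $\sum_{m}e_{m}\sum_{n}\psi_{n}A_{mn}$ is exactly the formal expansion $\sum_{m}f_{m}e_{m}$, and the whole assertion reduces to proving that this expansion converges in $L_{q}(\Omega,\mu_{1})$ for the stated $q$ and that its sum is $A\psi$.

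Next I would estimate the partial sums $S_{k}f=\sum_{m=0}^{k}f_{m}e_{m}$ in the $L_{q}$-norm. Under the standing assumption that the Zigmund-Marczinkevich theorem is applicable to $(\Omega,\mu_{1})$, it furnishes a bound for the increments $\|S_{k}f-S_{j}f\|_{L_{q}(\Omega,\mu_{1})}$ by a weighted sum of the coefficients $f_{m}$ in which the factors $M_{m}$ and the structural exponent $\nu$ of the theorem enter explicitly. Given such a bound, the Cauchy criterion shows that $\{S_{k}f\}$ converges in $L_{q}(\Omega,\mu_{1})$ as soon as the associated numerical series converges; this simultaneously yields $f\in L_{q}(\Omega,\mu_{1})$ and the series representation, while the limit is identified with $A\psi$ because the two coincide on the dense set of finite expansions, on which $A$ is already defined through the relations $A_{mn}=(e_{m},Ae_{n})_{\mu_{1}}$.

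The heart of the matter is therefore the convergence of the numerical series obtained by inserting the asymptotics $|f_{m}|\sim m^{-\lambda}$ and $M_{m}\sim m^{\beta}$ into the Zigmund-Marczinkevich estimate. After this substitution the general term reduces to a single power $m^{-s}$ with $s=s(q,\beta,\lambda,\nu)$ linear in the parameters, so convergence is equivalent to the one inequality $s>1$. Solving it for $q$ reproduces the three regimes of the statement: for $0\leq\lambda\leq1/2$ the coefficient decay is too weak to improve integrability and one only recovers the baseline $q=p$; for $1/2<\lambda<[\nu(\beta+1)-1]/(\nu-2)$ the admissible exponents are bounded above precisely by the right-hand side of (\ref{3}), the boundary being the value of $q$ at which $s=1$; and once $\lambda\geq[\nu(\beta+1)-1]/(\nu-2)$ the decay dominates the growth of the $M_{m}$ for every $q$, so $q$ may be taken arbitrarily large. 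The matrix form is then a mere transcription of $f_{m}=\sum_{n}A_{mn}\psi_{n}$.

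The step I expect to be the main obstacle is the honest application of the Zigmund-Marczinkevich theorem in the presence of the unboundedly growing factors $M_{m}\sim m^{\beta}$, together with the careful bookkeeping of how $\beta$ and the constant $\nu$ combine inside the weight. It is exactly this accounting that must produce the numerator $\nu(2\beta+1)$ and the denominator $\nu(\beta+1-\lambda)+2\lambda-1$ in (\ref{3}), and that must force both critical values of $\lambda$, namely $1/2$ and $[\nu(\beta+1)-1]/(\nu-2)$, to emerge as the borderline cases $s=1$ of the convergence exponent. Verifying that the pair $(\Omega,\mu_{1})$ genuinely satisfies the hypotheses of the theorem, as postulated, is what makes the whole scheme legitimate.
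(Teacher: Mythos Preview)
Your plan is correct and follows essentially the same route as the paper: first identify $f_{m}=(A\psi,e_{m})_{\mu_{1}}=\sum_{n}\psi_{n}A_{mn}$ by using the basis property of $\{e_{n}\}$ together with the boundedness of $A$ on $L_{p}(\Omega,\mu_{1})$, then feed the asymptotics $|f_{m}|\sim m^{-\lambda}$, $M_{m}\sim m^{\beta}$ into the Zigmund--Marczinkevich theorem and read off the condition $\beta\frac{\nu(q-2)}{\nu-2}+\frac{\nu-1}{\nu-2}(q-2)-\lambda q<-1$, whose analysis yields the three ranges for $\lambda$ exactly as you describe. The only cosmetic difference is that the paper invokes the Zigmund--Marczinkevich theorem directly as an existence statement rather than through a Cauchy argument on the increments $S_{k}f-S_{j}f$, and it records explicitly that for $0\le\lambda\le 1/2$ the right-hand side of \eqref{3} is at most $2$, so no gain over $q=p$ is possible.
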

\begin{proof}
Note,  that in accordance with the basis property of $\{e_{n}\},$ we have
$$
 \sum\limits_{n=0}^{l}\psi_{n}e_{n}\stackrel{L_{p}(\Omega,\mu_{1})}{\longrightarrow} \psi\in L_{p}(\Omega,\mu_{1}),\;l\rightarrow \infty.
$$
Since the operator $A$ is bounded, then
$$
\sum\limits_{n=0}^{l}\psi_{n}A e_{n}  \stackrel{L_{p}(\Omega,\mu_{1})}{\longrightarrow} A\left(\sum\limits_{n=0}^{\infty}\psi_{n}e_{n}\right)=A\psi,\;l\rightarrow \infty.
$$
Hence
$$
\sum\limits_{n=0}^{l}\psi_{n}\left(A e_{n},e_{m}\right)_{ \mu_{1} } \longrightarrow\left(  A \psi,e_{m}\right)_{ \mu_{1} },\;l\rightarrow \infty .
$$
Applying   first     formula  \eqref{1}, we obtain
$$
f_{m}=\left( A\psi,e_{m}\right)_{ \mu_{1} }
=  \sum\limits_{n=0}^{\infty} \psi_{n}A_{m n}.
$$
 Since $0\leq\lambda\leq1/2,$ then it is not hard to prove that
\begin{equation*}
\frac{\nu(2\beta+1)}{\nu(\beta+1-\lambda)+2\lambda-1}\leq2.
\end{equation*}
This implies that we cannot use condition \eqref{3} to obtain the additional information on the function $f.$ However,  note that in any case  by virtue of the conditions imposed on the operator $A,$ we have $p=q.$
Using condition \eqref{3}, by simple calculation  in the case $1/2<\lambda<[\nu(\beta+1)-1]/(\nu-2),$ we obtain
\begin{equation}\label{4}
\beta\frac{\nu(q-2)}{\nu-2}+\frac{\nu-1}{\nu-2}(q-2)-\lambda q<-1.
\end{equation}
It implies that corresponding series \eqref{2} is convergent. By virtue of this fact, having applied  the Zigmund-Marczinkevich theorem, we obtain the desired result.
 Now assume that $[\nu(\beta+1)-1]/(\nu-2)\leq\lambda,$ then consider relation \eqref{4} and let us prove that it is fulfilled for $2\leq q< \infty.$ By easy calculations, we can rewrite \eqref{4} in the following form
\begin{equation}\label{5}
q\left\{ \frac{\beta\nu}{\nu-2}+\frac{\nu-1}{\nu-2}-\lambda \right\}<
2\left\{\frac{\beta\nu}{\nu-2}+\frac{\nu-1}{\nu-2}\right\}-1.
\end{equation}
Since the multiplier of $q$ is non-positive and the right side of   inequality \eqref{5} is positive (the proof of this fact is left to the reader), then we have the fulfilment of inequality \eqref{5} for $2\leq q< \infty.$ Hence \eqref{4} holds and  using the same reasonings we obtain the desired result.
\end{proof}
The following result is  formulated in terms of the  coefficients of the expansion  on the basis $\{e_{n}\}$  and     devoted to the conditions under which  the inverse operator $B^{-1}$ exists.
Consider the following  operator   equation under  most general assumptions relative to the right part
\begin{equation}\label{6}
B\varphi =f .
\end{equation}
Let us consider the  Riemann-Liouville operator to demonstrate applicability of such a consideration. In this case relation \eqref{6} provides the generalized   Abel equation (see \cite{kukushkin2019axi}) that becomes the ordinary Abel equation, if we make the following assumptions concerning to the right part.
 Thus, we know  if the next conditions hold   $I^{1-\alpha}_{a+}f\in AC (\bar{I}),\; (I^{1-\alpha}_{a+}f)    (a)=0,\,I=(a,b)\subset \mathbb{R},$ then there  exists a unique solution of   the ordinary Abel equation    in the class $L_{1}(I)$   (see\cite{firstab_lit:samko1987}).
The   sufficient conditions for   solvability of    equation \eqref{6} in the abstract case   are established in the  following theorem.
\begin{teo}
Suppose $\lambda,\beta\in [0,\infty),\,   M_{n}\sim n^{\beta},$  the right part of    equation \eqref{6} such that
\begin{equation}\label{7}
 \left\|A^{-1}\!S_{k} f \right\|_{L_{p}(\Omega,\mu_{1})}\! \leq C,\;k\in \mathrm{N}_{0},\;\;\left|\sum\limits_{n=0}^{\infty} f_{n}A'_{mn}\right|\sim m^{-\lambda},\;m\rightarrow \infty ;
\end{equation}
then  there exists  a     solution of    equation \eqref{6} in     $L_{p}(\Omega,\mu_{1}),$  the   solution belongs to   $L_{q}(\Omega,\mu_{1}),$
 where $q=p,$ if  $\,0\leq\lambda\leq1/2;$
$q$ is an arbitrary large number     satisfying
\begin{equation}\label{8}
q<\frac{\nu(2\beta+1)}{\nu(\beta+1-\lambda)+2\lambda-1},
\end{equation}
if $1/2<\lambda<[\nu(\beta+1)-1]/(\nu-2);$ and $q$ is an arbitrary large number, if $  \lambda\geq [\nu(\beta+1)-1]/(\nu-2).$
 Moreover the solution   is represented by a convergent in $L_{q}(\Omega,\mu_{1})$   series
\begin{equation}\label{9}
\psi(x)=\sum\limits_{m=0}^{\infty}e_{m}(x) \sum\limits_{n=0}^{\infty} f_{n}A'_{mn}.
\end{equation}
Moreover, if  the measures $\mu_{i}$ such that: $\mu_{0}(M)=0\,\Rightarrow\, \mu_{1}(M)=0,\,\forall M\subset\Omega,$ there exists a sequence of sets
$$
 \bigcup\limits_{n=1}^{\infty}\Omega_{n}=\Omega,\;\Omega_{n}\subset \Omega_{n+1}, \; L_{p}(\Omega,\mu_{1})\subset L_{p}(\Omega_{n},\mu_{0}),\,\mu_{0}(\Omega\!\setminus\!\Omega_{n})\rightarrow 0,\,n\rightarrow \infty,
$$
the corresponding sets of functions $\Theta_{n},\,n\in \mathbb{N}_{0}$  such  that
$$
\Theta_{1} \subset \Theta_{2} \subset...\subset\Theta ,
$$
and $\Theta_{n},\Theta$ are   dense  sets in the spaces $L_{p'}(\Omega_{n},\mu_{0}),L_{p'}(\Omega,\mu_{0})$ respectively,
$$
 (g,\eta)_{\mu_{0},n}=(g,\eta)_{\mu_{0}},\,\eta\in \Theta_{n},\, g\in L_{p}(\Omega_{n},\mu_{0}),
 $$
 $$
 \forall\eta \in \Theta,\,\forall\xi\in L_{p}(\Omega,\mu_{1}) ,\,\exists g\in L_{p'}(\Omega,\mu_{1}):\, (\xi,\eta)_{\mu_{0}}=(\xi,B^{\ast}g)_{\mu_{1}},
$$
then the existing  solution is unique.
\end{teo}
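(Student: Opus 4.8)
The plan is to treat the three assertions separately—existence of a solution in $L_p(\Omega,\mu_1)$, its representation by the series \eqref{9}, and the gain of integrability up to $L_q$—and then to establish uniqueness under the supplementary measure-theoretic hypotheses.

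First I would produce the solution as a weak limit. The hypothesis $\|A^{-1}S_k f\|_{L_p(\Omega,\mu_1)}\leq C$ makes $\{A^{-1}S_k f\}_{k\in\mathbb{N}_0}$ a bounded sequence in the reflexive space $L_p(\Omega,\mu_1)$, so some subsequence converges weakly to an element $\psi\in L_p(\Omega,\mu_1)$. Testing against $e_m$ and using $A'_{mn}=(e_m,A^{-1}e_n)_{\mu_1}$, I would compute
\[
(\psi,e_m)_{\mu_1}=\lim_{j}\bigl(A^{-1}S_{k_j}f,e_m\bigr)_{\mu_1}=\lim_{j}\sum_{n=0}^{k_j}f_nA'_{mn}=\sum_{n=0}^{\infty}f_nA'_{mn},
\]
so the Fourier coefficients of $\psi$ are exactly the inner sums in \eqref{9}. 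Since $\{e_n\}$ is a basis of $L_p(\Omega,\mu_1)$, this identifies $\psi$ with the series \eqref{9} (convergent at least in $L_p$) and shows the weak limit is independent of the chosen subsequence.

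Next I would verify that $\psi$ solves \eqref{6}. Each $A^{-1}S_k f=\sum_{n=0}^{k}f_nA^{-1}e_n$ lies in $L_p(\Omega,\mu_0)\cap L_p(\Omega,\mu_1)$, the common domain on which $A$ and $B$ coincide, whence $B(A^{-1}S_k f)=A(A^{-1}S_k f)=S_k f$. Because $B$ is bounded it is weak-to-weak continuous, so along the subsequence $S_{k_j}f=B(A^{-1}S_{k_j}f)\rightharpoonup B\psi$; since $S_{k}f\to f$ strongly by the basis property, uniqueness of weak limits forces $B\psi=f$. The gain of integrability is then exactly the content of Theorem~\ref{T1} read with the pair $(A^{-1},f)$ in place of $(A,\psi)$: the coefficients $\psi_m=\sum_n f_nA'_{mn}$ obey $|\psi_m|\sim m^{-\lambda}$ with $M_m\sim m^{\beta}$, so repeating the estimates \eqref{4}--\eqref{5} and invoking the Zigmund-Marczinkevich theorem yields convergence of \eqref{9} in $L_q(\Omega,\mu_1)$ for the stated range of $q$, i.e. $\psi\in L_q(\Omega,\mu_1)$.

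For uniqueness it suffices to show that $Bu=0$, $u\in L_p(\Omega,\mu_1)$, forces $u=0$; apply this to the difference of two solutions. Fixing $\eta\in\Theta$ and taking the corresponding $g\in L_{p'}(\Omega,\mu_1)$ from the last hypothesis,
\[
(u,\eta)_{\mu_0}=(u,B^{\ast}g)_{\mu_1}=(Bu,g)_{\mu_1}=0 .
\]
For $\eta\in\Theta_n$ the compatibility relation $(u,\eta)_{\mu_0,n}=(u,\eta)_{\mu_0}$ (legitimate since $u\in L_p(\Omega,\mu_1)\subset L_p(\Omega_n,\mu_0)$) gives $(u,\eta)_{\mu_0,n}=0$ for every $\eta\in\Theta_n$. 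As $\Theta_n$ is dense in $L_{p'}(\Omega_n,\mu_0)$ and $u$, acting through $(\cdot,\cdot)_{\mu_0,n}$, is a bounded functional on that space, I conclude $u=0$ $\mu_0$-almost everywhere on $\Omega_n$; letting $n\to\infty$ along $\Omega_n\uparrow\Omega$ gives $u=0$ $\mu_0$-a.e. on $\Omega$, and the implication $\mu_0(M)=0\Rightarrow\mu_1(M)=0$ then yields $u=0$ in $L_p(\Omega,\mu_1)$. I expect the delicate point to be the existence step rather than the essentially mechanical uniqueness chain: the solution cannot in general be recovered as a strong limit of $A^{-1}S_k f$, so the uniform bound must be converted into a genuine solution through weak sequential compactness, with the passage $B(A^{-1}S_{k_j}f)\rightharpoonup B\psi$ justified by the weak continuity of the bounded operator $B$ and the coincidence of $A$ and $B$ on $L_p(\Omega,\mu_0)\cap L_p(\Omega,\mu_1)$; one must also check that the weak $L_p$-limit used for existence and the norm-$L_q$ limit furnished by the Zigmund-Marczinkevich theorem describe the same $\psi$, which follows from the coincidence of their Fourier coefficients.
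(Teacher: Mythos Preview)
Your argument is correct and follows essentially the same route as the paper: weak compactness of the bounded sequence $\{A^{-1}S_{k}f\}$ in the reflexive space $L_{p}(\Omega,\mu_{1})$, identification of the Fourier coefficients $\psi_{m}=\sum_{n}f_{n}A'_{mn}$, passage to $B\psi=f$, the Zigmund--Marczinkevich estimate for the $L_{q}$ gain, and the duality/density argument for uniqueness. The only tactical difference is that you pass from $A^{-1}S_{k_j}f\rightharpoonup\psi$ to $B\psi=f$ via the weak-to-weak continuity of the bounded operator $B$, whereas the paper tests against each $e_{m}$ through $B^{\ast}e_{m}$ and then invokes uniqueness of basis expansions; both are equivalent here since $\{e_{m}\}$ spans a dense set in $L_{p'}(\Omega,\mu_{1})$.
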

\begin{proof}
  Applying    formula \eqref{1} and using the theorem conditions,   we obtain  the following relation
\begin{equation}\label{10}
\left( A^{-1}S_{k} f ,e_{m}  \right)  \longrightarrow\sum\limits_{n=0}^{\infty}  f_{n}A'_{mn},\;k\rightarrow \infty,\,m\in  \mathbb{N}_{0}.
\end{equation}
 Since   relation \eqref{10} holds and  the sequence $\left\{A^{-1}S_{k} f\right\}_{0}^{\infty}$   is bounded with respect to the norm    $L_{p}(\Omega,\mu_{1}),$ then due to the  well-known theorem,  we have that the sequence  $\left\{A^{-1}S_{k} f\right\}_{0}^{\infty}$ weakly   converges  to  some function
 $\psi\in L_{p}(\Omega,\mu_{1}).$    Using the ordinary properties of inverse and adjoint operators, taking into account that $A^{-1}S_{k} f\in L_{p}(\Omega,\mu_{1}),$   we obtain the representation
 $$
\left( S_{k} f ,e_{m} \right)_{\mu_{1}} = \left(A A^{-1} S_{k} f ,e_{m} \right)_{\mu_{1}}= \left(B_{0} A^{-1} S_{k} f ,e_{m} \right)_{\mu_{1}}=
$$
$$
  =\left( A^{-1}S_{k} f , B^{\ast}_{0} e_{m}  \right)_{\mu_{1}}=\left( A^{-1}S_{k} f , B^{\ast}  e_{m}  \right)_{\mu_{1}},
 $$
 where $B_{0}$ is a restriction of $B$ to the set $A^{-1}S_{k} f,\,k\in \mathbb{N}_{0},$ thus we have $B^{\ast}_{0}\supset B^{\ast}. $
 Due to the week convergent of the sequence $\left\{A^{-1}S_{k} f\right\}_{0}^{\infty},$ we have
 $$
\left( A^{-1}S_{k} f ,B^{\ast} e_{m} \right)_{\mu_{1}}\rightarrow \left( \psi,B^{\ast} e_{m} \right)_{\mu_{1}}=\left( B\psi,  e_{m} \right)_{\mu_{1}},\,k\rightarrow\infty.
$$
It follows that
 \begin{equation}\label{11}
 \left( S_{k} f ,e_{m} \right)_{\mu_{1}} \rightarrow \left( B\psi,  e_{m} \right)_{\mu_{1}},\,k\rightarrow\infty.
 \end{equation}
 Taking into account that
\begin{equation*}
\left( S_{k} f ,e_{m} \right)_{\mu_{1}}=  \left\{ \begin{aligned}
 f_{m},\;k\geq m,\\
  \!0  ,\;  k<m\, \\
\end{aligned}
 \right.\;\;,
\end{equation*}
we obtain
$$
\left(  B\psi ,e_{m}    \right)_{\mu_{1}} =f_{m},\,m\in  \mathrm{N}_{0}.
$$
 Using the  uniqueness property of   basis expansion, we obtain that  the equality  $B\psi=f$ holds  on the set $\Omega \setminus M,\,\mu_{1}(M)=0.$
Hence  there  exists a  solution  of    equation \eqref{6}.  Now let us proceed to the following part of the  proof.
Note  that  we have  previously  proved  the fact   $\psi \in L_{p}(\Omega,\mu_{1}), $   if $0\leq\lambda<\infty.$
Let us show that   $\psi \in L_{q}(\Omega,\mu_{1}), $ where $q$ is defined by condition \eqref{8}, if $1/2<\lambda<[\nu(\beta+1)-1]/(\nu-2) .$   In accordance with the above reasonings,    we have
$$
\left(A^{-1}S_{k} f ,e_{m} \right)_{\mu_{1}}  \longrightarrow \left( \psi,e_{m} \right)_{\mu_{1}},\,k\rightarrow\infty ,\;m\in  \mathbb{N}_{0}.
$$
Combining this fact with   \eqref{10}, we get
\begin{equation}\label{12}
\psi_{m}=\left( \psi,e_{m} \right)_{\mu_{1}}   =\sum\limits_{n=0}^{\infty} f_{n}A'_{mn},\;m\in   \mathbb{N}_{0}.
\end{equation}
Using the  theorem conditions,  we have
$$
|\psi_{m}|\sim m^{-\lambda},\,m\rightarrow \infty.
$$
Now we need to apply the  Zigmund-Marczinkevich theorem. For this purpose, let us show that  under the assumption $1/2<\lambda<[\nu(\beta+1)-1]/(\nu-2) $ the relation \eqref{4} is fulfilled  in terms of this theorem conditions.
 Having done the reasonings analogous to the reasonings of Theorem \ref{T1}, we obtain the desired result. Thus we have  $\psi \in L_{q}(\Omega,\mu_{1}), $ where $  q$ is defined by condition \eqref{8}. The proof corresponding to the case
$  \lambda\geq [\nu(\beta+1)-1]/(\nu-2)$ is also analogous to the proof given in Theorem \ref{T1}.    In a similar way, we  obtain in this case  that $\psi \in L_{q}(\Omega,\mu_{1}), $ where   $q$ is arbitrary large.  Taking into account the facts  given above, by virtue of   the  Zigmund-Marczinkevich theorem, we also have the fulfilment of relation \eqref{9}, if $\lambda>1/2.$
Let us prove that under the second part of the  theorem assumptions the existing solution is unique.     Assume  that there  exists   another solution $\phi\in L_{p}(\Omega,\mu_{1}),$  and let us   denote $\xi:=\psi-\phi.$
 Due to the theorem conditions, we have
$$
\left(\xi,\eta \right)_{\mu_{0}} =\left(\xi,B^{\ast} g \right)_{\mu_{1}} =\left(B\xi,  g \right)_{\mu_{1}} =0,\,\eta\in  \Theta,\,g\in  L_{p'}(\Omega,\mu_{1}).
$$
Hence
 $$
\left(\xi,\eta \right)_{\mu_{0},n} =0,\,\forall \eta \in \Theta_{n}.
$$
We claim that  $\xi\neq 0.$ Therefore   in accordance  with the consequence of the Hahn-Banach theorem there exists the element $\vartheta\in L_{p'}(\Omega_{n},\mu_{0}),$ such that
$$
 \left(\xi,\vartheta \right)_{\mu_{0},n}  =\|\psi-\phi\|_{L_{p }(\Omega_{n},\mu_{0})}>0.
$$
On the other hand, there exists  the sequence $\{\eta_{k}\}_{1}^{\infty}\subset \Theta_{n},$ such  that $\eta_{n} \rightarrow \vartheta$ with respect to the norm $L_{p'}(\Omega_{n},\mu_{0}).$  Hence
$$
0=\left(\xi,\eta_{n} \right)_{\mu_{0},n} \rightarrow \left(\xi,\vartheta \right)_{\mu_{0},n}.
$$
Thus, we have come to the contradiction. Due to this fact, it is not hard to prove that $\psi=\phi$ on the set $\Omega\setminus M,\,\mu_{0}(M)=0.$    It implies that $\psi=\phi$ on the set $\Omega\setminus M,\,\mu_{1}(M)=0.$      The    uniqueness has been proved.
\end{proof}

\section{Applications}

  In this section our aim is to justify the application   of the obtained  abstract results  to the   fractional integral operator. Throughout this section   we  use the following denotation for the  weighted complex  Lebesgue spaces   $L_{p}(I, \mu),\,1\leq p<\infty,$  where $I=(a,b)$ is an interval  of the real axis and $d\mu=\omega(x) dx,\,x\in I,$ where  the  weighted function $\omega(x)=(x-a)^{\beta}(b-x)^{\gamma},\,\beta,\gamma \geq-1/2 .$       If $\omega=1,$ then we use the     notation  $L_{p}(I).$
Using the denotations of the paper \cite{firstab_lit:samko1987}, let us  define  the left-side  fractional integral  and   derivative   of   real order $\alpha\in(0,1)$ respectively
 $$
\left(I_{a+}^{\alpha}f\right)(x)=\frac{1}{\Gamma(\alpha)} \int\limits_{a}^{x}\frac{f(t)}{(x-t)^{1-\alpha}}\,dt  ,\;f\in L_{1}(I),\,
 \left(D^{\alpha}_{a+}f\right)(x)=\frac{d }{dx } \left(I_{a+}^{1-\alpha}f\right)(x),\,f\in I_{a+}^{\alpha}(L_{1}),
$$
where $I_{a+}^{\alpha}(L_{1}) $ is  the class  of functions that can be represented by the fractional integral  defined on $L_{1}(I)$ (see\cite{firstab_lit:samko1987}).
The orthonormal system of the  Jacobi  polynomials, with the    parameters $\beta,\gamma$ corresponding to the weighted function,  is denoted by
$$
p_{n}^{\,(\beta,\gamma)}(x)= \delta_{n} (x-a)^{-\beta}(b-x)^{-\gamma}\frac{d^{n}}{dx^{n}}\left[(x-a)^{\beta+n}(b-x)^{\gamma+n}\right],\,n\in \mathbb{N}_{0},
$$
where $\delta_{n}$ are constants depending of $\beta,\gamma.$
For   the case   corresponding to $ \beta=\gamma=0,$ we have the  Legendre  polynomials.
It is worth seing that   the criterion of the basis property for the Jacobi polynomials    was  proved by Pollard H.  in the work \cite{firstab_lit:H. Pollard 3}. In that paper   Pollard H. formulated  the theorem proposing   that the Jacobi polynomials have the basic property in the space $L_{p}(I_{0},\mu),\,I_{0}:=(-1,1),\;\beta,\gamma\geq -1/2,\,M(\beta,\gamma)<p<m(\beta,\gamma)$  and do not have the basis property, if $ p<M(\beta,\gamma)$ or $ p>m(\beta,\gamma),$ where $M(\beta,\gamma),\,m(\beta,\gamma)$ are constants depending on $\beta,\gamma.$
According to the denotations of the paper \cite{kukushkin2019axi}, we have
$$
A^{\alpha,\beta,\gamma}_{mn}:= \hat{\delta}_{m}\sum\limits_{k=0}^{n}  (-1)^{ k}  \frac{  \mathfrak{C}_{n}^{(k)}(\beta,\gamma)B(\alpha+\beta+k+1,\gamma+m+1) }{\Gamma(k+\alpha-m+1)},
$$
where $\hat{\delta}_{m}$ is a constant depending on $\alpha,\beta,\gamma.$  Suppose $\beta,\gamma\in[-1/2,1/2],\,M(\beta,\gamma)<p<m(\beta,\gamma).$ Further, we will use the short-hand notation $p_{n}:=p^{(\beta,\gamma)}_{n}.$
In accordance with the results of the paper \cite{kukushkin2019axi}, we have
\begin{equation}
\int\limits_{I}p _{m} I_{a+}^{\alpha}p _{n} d\mu
  =(-1)^{n}A^{\alpha,\beta,\gamma}_{mn},\;\;
 \int\limits_{I}p _{m} D_{a+}^{\alpha}p _{n} d\mu
 =(-1)^{n}A^{-\alpha,\beta,\gamma}_{mn}.
 \end{equation}
Let us assume that
$$
 \Omega:=I,\, \mu_{0}:=x,\,\mu_{1}:=\mu,\,L_{p}(\Omega,\mu_{0}):=L_{p}(I),\,L_{p}(\Omega,\mu_{1}):=L_{p}(I,\mu),\,e_{n}:=p_{n},
$$
$$
A:= I_{a+}^{\alpha},\,A^{-1}:=D_{a+}^{\alpha},\,A_{mn}:=A^{\alpha,\beta,\gamma}_{mn},\,A'_{mn}:=A^{-\alpha,\beta,\gamma}_{mn},\,
 B:=I_{a+}^{\alpha}.
$$
Taking into account the facts given above, it can be easily  proved (see \cite{kukushkin2019axi}) that all assumptions  of section 1 are fulfilled   except   the following ones.   In terms of the given interpretation,    the proof of uniqueness of the Abel equation solution does not seem obvious. Thus we need to produce it,
for this purpose  assume that
$$
\Omega_{n}:=\left(a+ \frac{1}{n},b- \frac{1}{n}\right),\,  \Theta_{n}:=C_{0}^{\infty}(\Omega_{n}),\,\Theta:=C_{0}^{\infty}(\Omega).
$$
Then the following assumptions of the uniqueness part of Theorem \ref{2}  are fulfilled
$$
 \bigcup\limits_{n=1}^{\infty}\Omega_{n}=\Omega,\;\Omega_{n}\subset \Omega_{n+1}, \; L_{p}(\Omega,\mu_{1})\subset L_{p}(\Omega_{n},\mu_{0}),\,\mu_{0}(\Omega\!\setminus\!\Omega_{n})\rightarrow 0,\,n\rightarrow \infty.
$$
 The verification is left to the reader. It is also not hard to prove that
$$
 (g,\eta)_{\mu_{0},n}=(g,\eta)_{\mu_{0}},\,\eta\in \Theta_{n},\, g\in L_{p}(\Omega_{n},\mu_{0}).
 $$
 Let us show that
 $$
 \forall\eta \in \Theta,\,\forall\xi\in L_{p}(\Omega,\mu_{1}) ,\,\exists h\in L_{p'}(\Omega,\mu_{1}):\, (\xi,\eta)_{\mu_{0}}=(\xi,B^{\ast}h)_{\mu_{1}}.
$$
Consider
$$
    \omega^{-1}(x)D^{\alpha}_{b-}\eta(x)= (x-a)^{-\beta}(b-x)^{1-\gamma }(b-x)^{-1} D^{\alpha}_{b-}\eta(x).
$$
If we note that $D^{\alpha}_{b-}\eta(b)=0$ (see \cite{firstab_lit:samko1987}) then it can be easily shown that
$$
(b-x)^{-1} D^{\alpha}_{b-}\eta(x)\rightarrow D^{\alpha+1}_{b-}\eta(b),\,x\rightarrow b.
$$
Hence the function  $
\omega^{-1} D^{\alpha}_{b-}\eta
$
is bounded. It implies that we have a representation $D^{\alpha}_{b-}\eta=\omega h,$ where $h$ is a bounded function and due to this fact it belongs to $L_{p'}(\Omega,\mu_{1}).$ By virtue of the fact $\eta\in C_{0}^{\infty}(\Omega)$(see \cite{firstab_lit:samko1987}),   we have $I^{\alpha}_{b-}D^{\alpha}_{b-}\eta=\eta.$ Hence
$$
\eta=I^{\alpha}_{b-}\omega h,\,h\in L_{p'}(\Omega,\mu_{1}).
$$
Taking into account the reasonings given above, we get
$$
\int\limits_{\Omega}\xi \eta\, d\mu_{0} =\int\limits_{\Omega}\xi \eta\, d\mu_{0}=\int\limits_{\Omega}\xi\, \omega^{-1}I^{\alpha}_{b-}\omega h\, d\mu_{1}.
$$
On the other hand, since we have
$$
|l_{\eta}(\xi)|=\left|\int\limits_{\Omega}\xi \eta\, d\mu_{0}\right|=\left|\int\limits_{\Omega}\xi \eta\,\omega^{-1} d\mu_{1}\right|\leq \|\xi\|_{L_{p}(\Omega,\mu_{1})}
\|\eta\,\omega^{-1}\|_{L_{p'}(\Omega,\mu_{1})},
$$
then using the  Riesz representation theorem, we get
$$
\int\limits_{\Omega}\xi \eta\, d\mu_{0}=\int\limits_{\Omega}\xi \eta^{\ast}\, d\mu_{1},\,\eta^{\ast}\in L_{p'}(\Omega,\mu_{1}).
$$
Hence $\omega^{-1}I^{\alpha}_{b-}\omega h\in L_{p'}(\Omega,\mu_{1}) $ and we obtain the desired result, where $B^{\ast}h:= \omega^{-1}I^{\alpha}_{b-}\omega h.$ Applying the analogous reasoning we prove that $B^{\ast}e_{m}=\omega^{-1}I_{b-}^{\alpha}\omega p_{m},\,(m=0,1,...,).$ As a result we have come to conclusion the theoretical results of section 1 have a concrete application to the questions connected with existence and uniqueness of the Abel equation  solution  in the weighted  case, which has an important role in the description of    physical   processes in porous medium.

\section{Conclusions}

In this paper, our first  aim is to  construct an operator model
 describing the fractional integral action in the weighted  Lebesgue spaces.  The approach used in the paper is the following: to generalize   known results of fractional calculus and by this way achieve a novel method of studying operators action  in Banach spaces.
 Besides the theoretical results of the paper, we produce the relevance of such a consideration that  provided by a plenty of applications  in  various engendering sciences. More precisely a description of such process as electrochemical processes,   dielectric polarizations,   colored noises  are provided by the theoretical part of this paper.
The foundation  of  models  describing the  processes listed above can be obtained by virtue of fractional calculus methods, the central point of which is a concept of the Riemann-Liouville operator acting in the weighted Lebesgue space.
In its own turn this concept is covered by the theoretical part of this paper.
Thus, the obtained results are  harmoniously connected with the concrete models of physical-chemical process.

\section*{Appendix}
In this section, we formulate  the Zigmund-Marczinkevich theorem (see \cite{firstab_lit: Marz}).

\noindent{\rm Theorem.}
{\it
Suppose that
$$
\left(\int\limits_{\Omega}|e_{n}|^{\nu}d\mu_{1}\right)^{1/\nu}\leq M_{n},\,2\leq q<\nu,\,M_{n}\leq M_{n+1},\, (n=0,1,...),
$$
and the series
$$
\sum\limits_{n=0}^{\infty}|c_{n}|^{q}M_{n}^{\frac{\nu(q-2)}{\nu-2}}n^{\frac{\nu-1}{\nu-2}(q-2)}<\infty
$$
converges. Then there exists a function $f\in L_{q}(\Omega,\mu_{1})$ such that
\begin{equation}\label{2}
\left(\int\limits_{\Omega}|f|^{q}d\mu_{1}\right)^{1/q}\leq A_{q,\nu}\left( \sum\limits_{n=0}^{\infty}|c_{n}|^{q}M_{n}^{\frac{\nu(q-2)}{\nu-2}}n^{\frac{\nu-1}{\nu-2}(q-2)} \right)^{1/q}.
\end{equation}
Here $A_{q,\nu}$ depends on $q$ and $\nu$ only. If by $A_{q,\nu}$ we mean the least number satisfying   \eqref{2}  for all sequences $\{c_{n}\},$ then
$$
A_{q,\nu}\leq G\frac{\nu-2}{\nu-q}q,
$$
where $G$ is an absolute constant.
}

\end{document}